\DeclareMathAlphabet{\pazocal}{OMS}{zplm}{m}{n}
\theoremstyle{plain}
\newtheorem*{theorem*}{Theorem}
\newtheorem{theo}{Theorem}[section]
\newtheorem{cor}[theo]{Corollary}
\newtheorem{defin}[theo]{Definition}
\newtheorem{prop}[theo]{Proposition}
\newtheorem{rem}[theo]{Remark}
\newtheorem*{lm*}{Lemma}
\newtheorem*{theo*}{Theorem}
\newtheorem*{cor*}{Corollary}
\newtheorem*{prop*}{Proposition}
\newtheorem*{def*}{Definition}
\newtheorem*{rem*}{Remark}
\begin{document}
	\title{Stability of projectively coresolved Gorenstein flat modules}

	\author{Dimitra-Dionysia Stergiopoulou}
	\address{Department of Mathematics, University of Athens, Panepistimioupolis, Athens 15784, Greece}
	\email{dstergiop@math.uoa.gr}

	\keywords{Gorenstein homological algebra, Gorenstein flat modules, Gorenstein dimension, stability}
	\subjclass{Primary: 16E05, 16E10, 16E30, 18G20}
	\begin{abstract}
		The stability of the class of projectively coresolved Gorenstein flat modules, under the very Gorenstein process used to define them, is proven in this paper. Moreover, a new characterization of the projectively coresolved Gorenstein flat dimension is given.
	\end{abstract}

	\maketitle
	
	\section {Introduction}	In the study of Gorenstein homological algebra, projective, injective and flat modules, on which classical homological algebra is based, are replaced with Gorenstein projective, Gorenstein injective and Gorenstein flat modules, respectively. Holm's metatheorem \cite{Hoo} states that every result in classical homological algebra has a counterpart in Gorenstein homological algebra. An open problem of the area is whether every Gorenstein projective module is Gorenstein flat. Saroch and Stovicek \cite{SS} introduced the class of projectively coresolved Gorenstein flat modules. These modules have nice properties; for instance they are simultaneously Gorenstein projective and Gorenstein flat \cite[Theorem 4.4]{SS}. Thus, projectively coresolved Gorenstein flat modules could probably play the role of projective modules in Gorenstein homological algebra. 
	
	Sather-Wagstaff, Sharif and White \cite{SSW} investigated the modules that arise from the iteration of the very procedure that leads to the Gorenstein projective modules and proved the stability of the classes of Gorenstein projective and Gorenstein injective modules. Using a special class of modules related to strongly Gorenstein flat modules, Bouchiba and Khaloui \cite{BK} proved the analogous stability of the class of Gorenstein flat modules. A natural question is whether the class of projectively coresolved Gorenstein flat modules is stable under this very Gorenstein process.

	In the first part of this paper, we prove the stability of the class of projectively coresolved Gorenstein flat modules (PGF modules, for short), which is the main result of Section 3. In particular, for every exact sequence of PGF modules $\textbf{G}=  \cdots \rightarrow G_1 \rightarrow G_0 \rightarrow G^0 \rightarrow G^1 \rightarrow \cdots$, such that $M\cong \textrm{Im}(G_0 \rightarrow G^0)$ and $H \otimes -$ preserves exactness of $\textbf{G}$ for every Gorenstein injective module $H$, the module $M$ is PGF (see Theorem \ref{finale}). A central role in the proof is played by the subcategory consisting of the $R$-modules $M$ for which there exists a short exact sequence of the form $0\rightarrow M \rightarrow G \rightarrow M \rightarrow 0$, where $G$ is a PGF module such that $I \otimes -$ preserves exactness of this sequence for every injective module $I$. A related class, which we call the class of strongly $n$-projectively coresolved Gorenstein flat modules (strongly $n$-PGF modules, for short), where $n$ is a nonnegative integer, is studied in the second part of the paper. Using this class we give a new characterization of the projectively coresolved Gorenstein flat dimension, defined by Dalezios and Emmanouil \cite{DE}. Strongly $0$-PGF modules are used in \cite{SS} to prove that every PGF module is Gorenstein projective. Strongly $n$-Gorenstein projective, injective and flat modules have been studied in \cite{BM} for $n=0$ and in \cite{MT} for general $n$.

	\section{Preliminaries}
	In this section, we collect certain notions and preliminary results that will be used in the sequel. Throughout this paper, $R$ is a unital associative ring and all modules are left $R$-modules. 

Saroch and Stovicek \cite{SS} defined the notion of PGF modules.  

\begin{defin}\label{definpgf}
	An R-module $M$ is called PGF, if there exists an exact sequence of projective modules 
	$ \textbf{P}= \cdots \rightarrow P_1 \rightarrow P_0 \rightarrow P^0 \rightarrow P^1 \rightarrow \cdots ,$ such that $M\cong \textrm{Im}(P_0 \rightarrow P^0)$ and such that $I\otimes -$ preserves the exactness of $\textbf{P}$ whenever I is an injective module.
\end{defin}
The class of PGF $R$-modules, denoted by ${\tt PGF}(R)$, is closed under extensions, direct sums, direct summands and kernels of epimorphisms \cite{SS}.

The following proposition gives a characterization of PGF modules.

\begin{prop}\label{pgf}
	For every module M, the following are equivalent:
	\begin{enumerate}
		\item M is PGF.
		\item M satisfies the following two conditions:
		\begin{itemize}
		\item[(i)] There exists an exact sequence of the form $0\rightarrow M \rightarrow P^0 \rightarrow P^1 \rightarrow \cdots$, where each $P^i$ is projective, such that $I \otimes -$ preserves exactness of this sequence for every injective module $I$.
		\item[(ii)] $\textrm{Tor}_i^R(I,M)=0$ for every $i>0$ and every injective module $I$.
	\end{itemize}
\item There exists a short exact sequence of the form $0\rightarrow M \rightarrow P \rightarrow G \rightarrow 0$, where $P$ is projective and $G$ is a PGF module.
	\end{enumerate}
\end{prop}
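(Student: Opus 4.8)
The plan is to prove the equivalence of the four characterizations of PGF modules by establishing a cycle of implications. The statement bundles together the original definition (1), two conditions-based descriptions (2) and (3) differing only in the strength of the Tor-vanishing hypothesis, and a compact short-exact-sequence description (4). I would arrange the argument as $(1)\Rightarrow(3)\Rightarrow(2)\Rightarrow(4)\Rightarrow(1)$, since $(3)\Rightarrow(2)$ is immediate (injective modules have injective dimension zero, so condition (3)(ii) specializes to (2)(ii), and the coresolution condition is identical).

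First, for $(1)\Rightarrow(3)$: given a complete PGF resolution $\textbf{P}$ with $M\cong\operatorname{Im}(P_0\to P^0)$, the right half $0\to M\to P^0\to P^1\to\cdots$ furnishes condition (i) directly, since $I\otimes-$ preserves exactness of all of $\textbf{P}$. For condition (ii), I would use the left half $\cdots\to P_1\to P_0\to M\to 0$, which is a projective (hence flat) resolution of $M$; the fact that $I\otimes-$ keeps the whole complex exact forces $\operatorname{Tor}_i^R(I,M)=0$ for all $i>0$. To upgrade this to all $I'$ of finite injective dimension as required by (3)(ii), I would induct on the injective dimension using the long exact sequence in $\operatorname{Tor}$ associated to a short exact sequence $0\to K\to E\to I'\to 0$ with $E$ injective and $\operatorname{id}(K)<\operatorname{id}(I')$; the vanishing for injectives gives the base case, and dimension shifting propagates it.

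The step $(2)\Rightarrow(4)$ is where the real content lies, and I expect it to be the main obstacle. Starting from condition (2), I would take the first syzygy: break the coresolution $0\to M\to P^0\to P^1\to\cdots$ into $0\to M\to P^0\to G\to 0$ with $G=\operatorname{Im}(P^0\to P^1)=\operatorname{Coker}(M\to P^0)$, so $P:=P^0$ is projective. It then remains to show $G$ is PGF. The complex $0\to G\to P^1\to P^2\to\cdots$ gives condition (2)(i) for $G$ (the functor $I\otimes-$ still preserves its exactness, as it is a truncation of the original one), and I would verify (2)(ii) for $G$ by dimension shifting: the short exact sequence $0\to M\to P^0\to G\to 0$ yields $\operatorname{Tor}_{i}^R(I,G)\cong\operatorname{Tor}_{i-1}^R(I,M)$ for $i\geq 2$ and a piece of the long exact sequence controlling $\operatorname{Tor}_1^R(I,G)$. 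The subtlety is the $\operatorname{Tor}_1$ term: I must confirm that the exactness of $0\to M\to P^0\to G\to 0$ is preserved by $I\otimes-$ (which is exactly the surjectivity of $\operatorname{Tor}_0$-level map, i.e. the condition already built into (2)(i) for the first step) so that $\operatorname{Tor}_1^R(I,G)=0$. Combining these, $G$ satisfies the conditions of (2), and if I have already shown $(2)\Leftrightarrow(1)$ for the relevant range, $G$ is PGF.

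Finally, $(4)\Rightarrow(1)$ is a splicing argument: given $0\to M\to P\to G\to 0$ with $G$ PGF, I take a complete PGF resolution of $G$ and a projective resolution of $M$, and assemble a complete PGF resolution for $M$. Concretely, $G$ being PGF gives an exact complex of projectives $\cdots\to Q_1\to Q_0\to Q^0\to\cdots$ realizing $G$, kept exact by $I\otimes-$; prepending the projective $P$ via the sequence $0\to M\to P\to G\to 0$ and gluing onto a projective resolution of $M$ on the left produces a doubly-infinite acyclic complex of projectives with $M$ as the relevant image. The exactness under $I\otimes-$ follows from that of $G$'s resolution together with the preservation of the gluing sequence, which holds because $\operatorname{Tor}_1^R(I,G)=0$ (a consequence of $G$ being PGF via the already-established directions). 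The one point requiring care throughout is keeping track of which Tor-vanishing statements are available at each stage, so that the cyclic dependence between the implications does not become circular; I would resolve this by proving the Tor-vanishing facts for PGF modules once, directly from Definition \ref{definpgf}, before entering the cycle.
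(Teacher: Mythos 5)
Your inventory of ingredients is essentially right, but the logical organization of the cycle $(1)\Rightarrow(3)\Rightarrow(2)\Rightarrow(4)\Rightarrow(1)$ breaks down at the step $(2)\Rightarrow(4)$, and this is a genuine circularity rather than a bookkeeping issue. You form $0\rightarrow M\rightarrow P^0\rightarrow G\rightarrow 0$ with $G=\textrm{Coker}(M\rightarrow P^0)$ and verify correctly that $G$ again satisfies both conditions of (2); but to conclude that $G$ is PGF you then invoke ``$(2)\Leftrightarrow(1)$ for the relevant range''. At that point of the cycle the only implications available are $(1)\Rightarrow(3)\Rightarrow(2)$; the converse $(2)\Rightarrow(1)$ is precisely what the remaining arc $(2)\Rightarrow(4)\Rightarrow(1)$ is supposed to deliver, so your proof of $(2)\Rightarrow(4)$ presupposes its own conclusion. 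Your proposed safeguard---proving the Tor-vanishing facts for PGF modules directly from Definition \ref{definpgf} before entering the cycle---does not repair this: those facts concern modules already known to be PGF, whereas the missing step is the promotion of a module merely satisfying (2) to a PGF module. The repair is to prove $(2)\Rightarrow(1)$ directly, which is exactly what the paper does: splice a projective resolution $\cdots\rightarrow P_1\rightarrow P_0\rightarrow M\rightarrow 0$ with the given coresolution $0\rightarrow M\rightarrow P^0\rightarrow P^1\rightarrow\cdots$; after applying $I\otimes -$ the left half is exact because $\textrm{Tor}_i^R(I,M)=0$ for $i>0$, the right half is exact by hypothesis (i), and exactness at the junction follows from right-exactness of the tensor product. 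You already carry out essentially this splicing inside your $(4)\Rightarrow(1)$ step, so nothing new is needed, only a reordering: prove $(1)\Leftrightarrow(2)$ first by splicing, then $(2)\Leftrightarrow(3)$, after which $(1)\Rightarrow(4)$ is immediate (the cosyzygy in a complete PGF resolution is PGF) and $(4)\Rightarrow(1)$ goes through as you describe.

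A second, smaller slip occurs in upgrading the Tor-vanishing from injective modules to modules $I'$ of finite injective dimension: you dimension-shift along a sequence $0\rightarrow K\rightarrow E\rightarrow I'\rightarrow 0$ with $E$ injective and $\textrm{id}_R(K)<\textrm{id}_R(I')$. Such a presentation need not exist: over $R=\mathbb{Z}$ the module $I'=\mathbb{Z}$ has injective dimension $1$, yet a quotient of a divisible group by a divisible subgroup is divisible, so $\mathbb{Z}$ admits no such sequence. The induction must run in the other direction, along $0\rightarrow I'\rightarrow E\rightarrow K\rightarrow 0$ with $E$ injective and $\textrm{id}_R(K)\leq \textrm{id}_R(I')-1$, which is how the paper argues (compare the proof of Proposition \ref{n-spgf}); with that correction the inductive step goes through verbatim.
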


\begin{proof} Using the definition of  PGF modules, the equivalence $(1)\Leftrightarrow (2)$ is obtained by standard arguments. Also, by definition, the implication $(1)\Rightarrow (3)$ is clear. 
$(3)\Rightarrow (2)$: Let $0\rightarrow M \rightarrow P \rightarrow G \rightarrow 0$ be an exact sequence, where $P$ is projective and $G$ is a PGF module. Since $G$ is PGF, the implication $(1)\Rightarrow (2)$ yields $\textrm{Tor}_i^R(I,G)=0$ for every $i>0$ and every injective module $I$. Let $I$ be an injective module. Then, the short exact sequence $0\rightarrow M \rightarrow P \rightarrow G \rightarrow 0$ induces a long exact sequence of the form $\cdots \rightarrow \textrm{Tor}_{i+1}^R(I,G) \rightarrow \textrm{Tor}_{i}^R(I,M)\rightarrow \textrm{Tor}_{i}^R(I,P)\rightarrow \cdots ,$ where $i>0$, which implies that $\textrm{Tor}_{i}^R(I,M)=0$ for every $i>0$. Moreover, there exists an exact sequence of the form $0\rightarrow G \rightarrow P^0 \rightarrow  P^1 \rightarrow \cdots $, such that $I\otimes -$ preserves the exactness of this sequence. We obtain an exact sequence of the form $0 \rightarrow M\rightarrow P\rightarrow P^0 \rightarrow  P^1 \rightarrow \cdots ,$ such that $I\otimes -$ preserves exactness of this sequence for every injective module $I$.\end{proof}

\section{Stability of PGF modules}
In this section we prove that the class of PGF modules is stable under the very Gorenstein process used to define PGF modules. 

Throughout this section we use the following notation.

We denote by ${\tt PGF}^{(2)}(R)$ (respectively, ${\tt PGF}^{(2)}_{\pazocal{I}}(R)$) the subcategory of the R-modules $M$ for which there exists an exact sequence of PGF modules $$\textbf{G}= \cdots \rightarrow G_1 \rightarrow G_0 \rightarrow G^0 \rightarrow G^1 \rightarrow \cdots,$$ such that $M\cong \textrm{Im}(G_0 \rightarrow G^0)$ and such that $H\otimes -$ (respectively, $I\otimes -$) preserves the exactness of $\textbf{G}$ whenever $H$ is a Gorenstein injective module (respectively, $I$ is an injective module).

Since every injective module is also Gorenstein injective, we have the inclusions ${\tt {PGF}}(R)\subseteq {\tt {PGF}}^{(2)}(R) \subseteq {\tt {PGF}}^{(2)}_{\pazocal{I}}(R).$

Also, we denote by ${\tt {SPGF}}^{(2)}_{\pazocal{I}}(R)$ the subcategory of the R-modules $M$ for which there exists a short exact sequence of the form $0\rightarrow M \rightarrow G \rightarrow M \rightarrow 0$, where $G$ is a PGF module, such that $I\otimes -$ preserves the exactness of this sequence whenever $I$ is an injective module.

\begin{prop}\label{prop44}
	Let $M\in {\tt{PGF}}^{(2)}_{\pazocal{I}}(R)$. Then, $\textrm{Tor}_i^R(I',M)=0$ for every $i>0$ and every module $I'$ with finite injective dimension.
\end{prop}

\begin{proof} Let $M\in {\tt PGF}^{(2)}_{\pazocal{I}}(R)$ and $I'$ be a module with finite injective dimension. Then, there exists an exact sequence of PGF modules $\cdots \rightarrow G_1 \rightarrow G_0 \rightarrow G^0 \rightarrow G^1 \rightarrow \cdots,$ where $M\cong \textrm{Im}(G_0 \rightarrow G^0)$ and the functor $I\otimes - $ preserves the exactness of this sequence for every injective module $I$. Let $I'$ be a module with $\textrm{id}_R(I')=n<\infty$. We proceed by induction on $n\geq0$. Consider the short exact sequence $0\rightarrow K \rightarrow G_0 \rightarrow M \rightarrow 0$, where $K=\textrm{Im}(G_1\rightarrow G_0)\in {\tt PGF}^{(2)}_{\pazocal{I}}(R)$. Then, for every injective module $I'$ we have a short exact sequence of the form $0\rightarrow I'\otimes K \rightarrow I'\otimes G_0 \rightarrow I'\otimes M \rightarrow 0$. Since the module $G_0$ is PGF, Proposition \ref{pgf} (2) implies that $\textrm{Tor}_i^R(I',G_0)=0$ for every $i>0$. Then, the short exact sequence $0\rightarrow K \rightarrow G_0 \rightarrow M \rightarrow 0$ induces a long exact sequence of the form $$\cdots \rightarrow \textrm{Tor}_1^R(I',G_0) \rightarrow \textrm{Tor}_1^R(I',M)\rightarrow I'\otimes K \rightarrow I'\otimes G_0 \rightarrow I'\otimes M \rightarrow 0, $$ which implies that $\textrm{Tor}_1^R(I',M)=0$. 
	Moreover, the long exact sequence $$\cdots\rightarrow \textrm{Tor}_{i+1}^R(I',G_0)\rightarrow \textrm{Tor}_{i+1}^R(I',M) \rightarrow \textrm{Tor}_{i}^R(I',K)\rightarrow \textrm{Tor}_{i}^R(I',G_0) \rightarrow \cdots,$$ where $i>0$, yields $\textrm{Tor}_{i+1}^R(I',M) = \textrm{Tor}_{i}^R(I',K)$ for every $i>0$. Thus, using induction on $i$ and the fact that $K$ lies in ${\tt PGF}^{(2)}_{\pazocal{I}}(R)$, we obtain that $\textrm{Tor}_i^R(I',M)=0$ for every $i>0$ and every injective module $I'$.
	
	We suppose now that $n\geq 1$. Let $0\rightarrow I' \rightarrow I_0 \rightarrow I_1 \rightarrow \cdots \rightarrow I_n \rightarrow 0$ be an injective resolution of $I'$ of length $n$ and $J=\textrm{Im}(I_0 \rightarrow I_1)$. Since $\textrm{id}_R (J)\leq n-1$, our inductive hypothesis implies that $\textrm{Tor}_i^R(J,M)=0$ for every $i>0$. Thus, the short exact sequence $0\rightarrow I' \rightarrow I_0 \rightarrow J \rightarrow 0$ induces a long exact sequence of the form $ \cdots \rightarrow \textrm{Tor}_{i+1}^R(J,M)\rightarrow \textrm{Tor}_{i}^R(I',M) \rightarrow \textrm{Tor}_{i}^R(I_0,M) \rightarrow \cdots,$ where $i>0$, from which we obtain that $\textrm{Tor}_{i}^R(I',M)=0$ for every $i>0$. \end{proof}

The following proposition gives a characterization of the subcategory ${\tt SPGF}^{(2)}_{\pazocal{I}}(R)$.

\begin{prop}\label{props2pgfi}
	For every module M, the following are equivalent:
	\begin{enumerate}
		\item $M \in {\tt SPGF}^{(2)}_{\pazocal{I}}(R)$.
		\item There exists a short exact sequence $0\rightarrow M \rightarrow G \rightarrow M \rightarrow 0$ such that G is a PGF module and $\textrm{Tor}_1^R(I,M)=0$ for every injective module I.
		\item There exists a short exact sequence $0\rightarrow M \rightarrow G \rightarrow M \rightarrow 0$ such that G is a PGF module and $\textrm{Tor}_i^R(I,M)=0$ for every $i>0$ and every injective module I.
		\item There exists a short exact sequence $0\rightarrow M \rightarrow G \rightarrow M \rightarrow 0$ such that G is a PGF module and $\textrm{Tor}_i^R(I',M)=0$ for every $i>0$ and every module $I'$ with finite injective dimension.
		\item There exists a short exact sequence $0\rightarrow M \rightarrow G \rightarrow M \rightarrow 0$ such that G is a PGF module and $I' \otimes -$ preserves exactness of this sequence for every module $I'$ with finite injective dimension.
		
	\end{enumerate}
\end{prop}

\begin{proof}
	This follows immediately from the definition of the class ${\tt SPGF}^{(2)}_{\pazocal{I}}(R)$ and Proposition \ref{prop44}, using standard arguments. 
\end{proof}

\begin{prop}\label{final3}
	Every module in ${\tt PGF}^{(2)}_{\pazocal{I}}(R)$ is a direct summand of a module in ${\tt SPGF}^{(2)}_{\pazocal{I}}(R)$.
\end{prop}

\begin{proof}
	Let $M$ be a module in ${\tt PGF}^{(2)}_{\pazocal{I}}(R)$. Then, there exists an exact sequence of PGF modules 
	$ \textbf{G}= \cdots \rightarrow G_1 \xrightarrow{d_1^G} G_0 \xrightarrow{d_0^G} G_{-1} \xrightarrow{d_{-1}^G} G_{-2} \rightarrow \cdots$,
	such that $M\cong \textrm{Im}(d_0^G)$ and such that the sequence $I\otimes \textbf{G}$ is exact for every injective module $I$. For every $n \in \mathbb{Z}$, we denote by $\Sigma^n\textbf{G}$ the exact sequence obtained from $\textbf{G}$ by increasing all indices by n: $(\Sigma^n\textbf{G})_i=G_{i-n}$ and $d_i^{\Sigma^n G}=d_{i-n}^G$ for every $i\in \mathbb{Z}$. Consider now the exact sequence
	$$\bigoplus_{n \in \mathbb{Z}}(\Sigma^n\textbf{G})= \cdots \rightarrow \bigoplus_{i \in \mathbb{Z}}G_i \xrightarrow{\bigoplus_{i \in \mathbb{Z}}d_i^G}\bigoplus_{i \in \mathbb{Z}}G_i \xrightarrow{\bigoplus_{i \in \mathbb{Z}}d_i^G}\bigoplus_{i \in \mathbb{Z}}G_i \rightarrow \cdots.$$ Since the class ${\tt PGF}(R)$ is closed under direct sums, we obtain that the module $\bigoplus_{i \in \mathbb{Z}}G_i$ is also PGF. Moreover, by Proposition 20.2 (3) of \cite{AF}, we have the isomorphism of complexes 
	$I\otimes (\bigoplus_{n \in \mathbb{Z}}(\Sigma^n\textbf{G})) \cong \bigoplus_{n \in \mathbb{Z}}(I\otimes \Sigma^n\textbf{G})$ which is an exact sequence for every injective module $I$. Thus, $\textrm{Im}(\bigoplus_{i \in \mathbb{Z}}d_i^G)$ lies in ${\tt SPGF}^{(2)}_{\pazocal{I}}(R)$ and $M$ is a direct summand of this module.\end{proof}

\begin{defin}
	Let $M$ be a module in ${\tt SPGF}^{(2)}_{\pazocal{I}}(R)$. We say that a module N is an $M$-type if there exists a short exact sequence of the form $0\rightarrow M \rightarrow N \rightarrow G \rightarrow 0,$ where $G$ is a PGF module. 
\end{defin}

\begin{prop}\label{final2}
	Let $M$ be a module in ${\tt SPGF}^{(2)}_{\pazocal{I}}(R)$ and $N$ be an $M$-type. Then, the following hold.
	\begin{enumerate}
		\item $\textrm{Tor}_i^R(I,N)=0$ for every $i>0$ and every injective module $I$.
		\item There exists an exact sequence of the form $0\rightarrow N \rightarrow P \rightarrow K \rightarrow 0$, where $P$ is projective, $K$ is an $M$-type module and the functor $I\otimes -$ preserves the exactness of this sequence for every injective module $I$.
	\end{enumerate}
	
\end{prop}

\begin{proof}
	(1) Let $I$ be an injective module. Since $N$ is an $M$-type, there exists a short exact sequence of the form $0\rightarrow M \rightarrow N \rightarrow G \rightarrow 0$, where $G$ is PGF, which induces a long exact sequence of the form $\cdots \rightarrow \textrm{Tor}_{i}^R(I,M) \rightarrow \textrm{Tor}_{i}^R(I,N) \rightarrow \textrm{Tor}_{i}^R(I,G) \rightarrow \cdots,$ where $i>0$. Since $M\in {\tt SPGF}^{(2)}_{\pazocal{I}}(R)$, by Proposition \ref{props2pgfi} (3), we have $\textrm{Tor}_{i}^R(I,M)=0$ for every $i>0$. Moreover, Proposition \ref{pgf} (2) yields $\textrm{Tor}_{i}^R(I,G)=0$ for every $i>0$. Consequently, $\textrm{Tor}_{i}^R(I,N)=0$, for every $i>0$ and every injective module $I$.
	
	(2) Since $M\in {\tt SPGF}^{(2)}_{\pazocal{I}}(R)$, there exists a short exact sequence of the form $0\rightarrow M \rightarrow G' \rightarrow M \rightarrow 0$, where $G'$ is a PGF module. Since $N$ is an $M$-type, there exists also a short exact sequence of the form $0\rightarrow M \rightarrow N \rightarrow G \rightarrow 0$, where $G$ is a PGF module. Consider the pushout diagram of the above short exact sequences:
	\[
	\begin{array}{ccccccccc}
		& & 0 & & 0 & & & &\\
		& & \downarrow & & \downarrow & & & &\\
		0&\rightarrow & M &\rightarrow & G' &\rightarrow & M & \rightarrow &0\\
		& & \downarrow & & \downarrow & & \parallel & &\\
		0&\rightarrow & N &\rightarrow & F &\rightarrow & M & \rightarrow &0\\
		& & \downarrow & & \downarrow & & & &\\
		& & G & = & G & & & &\\
		& & \downarrow & & \downarrow & & & &\\
		& & 0 & & 0 & & & &
	\end{array}
	\]
	\noindent Since the class ${\tt PGF}(R)$ is closed under extensions, using the short exact sequence $0\rightarrow G' \rightarrow F \rightarrow G \rightarrow 0$, we obtain that the module $F$ is also PGF. Thus, there exists a short exact sequence of the form $0\rightarrow F \rightarrow P \rightarrow F' \rightarrow 0$, where $P$ is projective and $F'$ is PGF. Consider now the following pushout diagram:
	\[
	\begin{array}{ccccccccc}
		& & & & 0 & & 0 & &\\
		& & & & \downarrow & & \downarrow & &\\
		0&\rightarrow & N &\rightarrow & F &\rightarrow & M & \rightarrow &0\\
		& & \parallel & & \downarrow & & \downarrow & &\\
		0&\rightarrow & N &\rightarrow & P &\rightarrow & K & \rightarrow &0\\
		& & & & \downarrow & & \downarrow & &\\
		& & & & F' & = & F' & &\\
		& & & & \downarrow & & \downarrow & &\\
		& & & & 0 & & 0 & &
	\end{array}
	\]
	\noindent Since $F'$ is PGF, the module $K$ is an $M$-type. By (1) we have $\textrm{Tor}_1^R(I,K)=0$ for every injective module $I$. Thus, the sequence $0\rightarrow I\otimes N \rightarrow I\otimes P \rightarrow I\otimes K\rightarrow 0$ is exact for every injective module $I$.\end{proof}

\begin{cor}\label{corfinal}
	Let $M$ be a module in ${\tt SPGF}^{(2)}_{\pazocal{I}}(R)$ and $N$ be an $M$-type module. Then, $N$ is PGF.
\end{cor}

\begin{proof}
	Since $N$ is an $M$-type module, Proposition \ref{final2} (2) implies that there exists a short exact sequence of the form $0\rightarrow N \rightarrow P^0 \rightarrow K \rightarrow 0$, where $P^0$ is projective, $K$ is an $M$-type and the functor $I\otimes -$ preserves the exactness of this sequence for every injective module $I$. The iteration of this process yields an exact sequence of the form $0\rightarrow N \rightarrow P^0 \rightarrow P^1 \rightarrow P^2 \rightarrow \cdots,$ where $P^i$ is projective for every $i\geq 0$ and the functor $I\otimes -$ preserves the exactness of this sequence for every injective module $I$. Using Proposition \ref{final2} (1), we also have $\textrm{Tor}_{i}^R(I,N)=0$ for every $i>0$ and every injective module $I$. Thus, Proposition \ref{pgf} (2) implies that $N$ is PGF.
\end{proof}

\begin{theo} \label{finale}
	${\tt PGF}(R)= {\tt PGF}^{(2)}(R).$
\end{theo}

\begin{proof}
	It suffices to prove that ${\tt PGF}^{(2)}_{\pazocal{I}}(R)\subseteq {\tt PGF}(R)$. Since the class ${\tt PGF}(R)$ is closed under direct summands, by Proposition \ref{final3} it suffices to prove that ${\tt SPGF}^{(2)}_{\pazocal{I}}(R)\subseteq {\tt PGF}(R)$. Let $M$ be a module in ${\tt SPGF}^{(2)}_{\pazocal{I}}(R)$. Letting $G=0$ in Definition 3.4, it follows that $M$ is an $M$-type. Thus, Corollary 3.6 implies that $M$ is PGF. \end{proof}

    \section{Strongly $n$-PGF modules}
	
	In this section, we define the notion of strongly $n$-PGF modules and we give a new characterization of modules with finite PGF-dimension. In particular, we prove that an $R$ module $M$ has PGF-dimension less or equal to $n$ if and only if it is a direct summand of a strongly $n$-PGF module (see Theorem \ref{theo1}).
	
	\begin{defin}
		An R-module M is called strongly projectively coresolved Gorenstein flat (strongly PGF for short), if there exists a short exact sequence of the form 
		$ 0\rightarrow M \rightarrow P \rightarrow M \rightarrow 0 $
		such that P is a projective R-module and $I\otimes -$ preserves exactness of this sequence whenever $I$ is an injective module.
	\end{defin}
\begin{rem}\label{rpd}
	Let $M$ be a PGF module. Corollary 4.5 of \cite{SS} yields $\textrm{Ext}^i_R(M,P)=0$ for every $i>0$ and every projective module $P$. Consequently, every strongly PGF module is also strongly Gorenstein projective and strongly Gorenstein flat (see Definitions 2.1 and 3.1 of \cite{BM}). Thus, ${\tt SPGF}(R)= {\tt SGProj}(R) \cap {\tt SGFlat}(R)$, where ${\tt SPGF}(R)$, ${\tt SGProj}(R)$ and ${\tt SGFlat}(R)$ are the classes of strongly PGF, strongly Gorenstein projective and strongly Gorenstein injective modules respectively. A schematic presentation is given below:
\[
\begin{array}{ccccccc}
	{\tt SGProj}(R) \!\!\! & & & & \!\!\! {\tt SGFlat}(R) \!\!\! & & \\
	& \!\!\! \nwarrow \!\!\! & & \!\!\! \nearrow& \!\!\! & \!\!\! 
	\nwarrow \!\!\! & \\
	& & \!\!\! {\tt SPGF}(R) \!\!\! & & & & \!\!\! {\tt Flat}(R) \!\!\! \\
	& & & \!\!\! \nwarrow \!\!\! & & \!\!\! \nearrow \!\!\! & \\
	& & & & \!\!\! {\tt Proj}(R) \!\!\! & & 
\end{array}
\]
	Here, ${\tt Proj(R)}$ and ${\tt Flat}(R)$ denote the classes of projective and flat modules, respectively, and all arrows are inclusions. Moreover, by \cite{SS} we have ${\tt Proj}(R)={\tt PGF}(R) \cap {\tt Flat}(R)={\tt SPGF}(R) \cap {\tt Flat}(R)$.
	\end{rem}
\begin{defin} \label{defns}
	An R-module M is called strongly n-projectively coresolved Gorenstein flat (strongly n-PGF for short), if there exists a short exact sequence of the form 
	$ 0 \rightarrow M\rightarrow F \rightarrow M \rightarrow 0 $, such that $\textrm{pd}_R(F)\leq n$ and $\textrm{Tor}_{n+1}^{\textrm{R}}(I,M)=0$ for every injective module $I$.
	
\end{defin}

\begin{rem*}
	A direct consequence of the above definition is that the strongly 0-PGF modules are precisely the strongly PGF modules.
\end{rem*}

\begin{cor}\label{corr}
	Every module with projective dimension less than or equal to $n$ is a strongly n-PGF module.
\end{cor}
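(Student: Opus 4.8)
The plan is to exhibit directly a short exact sequence witnessing that $M$ is strongly $n$-PGF, using the trivial split extension. Suppose $\textrm{pd}_R(M)\leq n$. First I would consider the canonical split short exact sequence
$$0\rightarrow M \xrightarrow{\iota} M\oplus M \xrightarrow{\pi} M \rightarrow 0,$$
where $\iota$ is the inclusion into the first summand and $\pi$ is the projection onto the second. Setting $F=M\oplus M$, the first requirement of Definition \ref{defns} is met, since the projective dimension of a finite direct sum equals the maximum of the projective dimensions of its summands; hence $\textrm{pd}_R(F)=\textrm{pd}_R(M)\leq n$.

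The second step is to verify the Tor-vanishing condition. Since $\textrm{pd}_R(M)\leq n$, one may choose a projective resolution of $M$ of length at most $n$; tensoring the associated deleted complex with any module $I$ yields a complex concentrated in homological degrees $0,\ldots,n$, so that $\textrm{Tor}_i^R(I,M)=0$ for all $i>n$. In particular $\textrm{Tor}_{n+1}^R(I,M)=0$ for every module $I$, and a fortiori for every injective module $I$. Thus both conditions of Definition \ref{defns} hold for the sequence above, and $M$ is strongly $n$-PGF.

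I do not anticipate a genuine obstacle here: the statement is immediate from the definition once the split extension is chosen, and the only points requiring care are the elementary facts that projective dimension is additive over direct sums and that finite projective dimension forces $\textrm{Tor}$ to vanish in degrees exceeding the length of a resolution. No appeal to the theory of PGF modules itself is needed, which is consistent with the result being flagged as a direct consequence of the definition.
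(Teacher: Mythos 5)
Your proof is correct and follows essentially the same route as the paper: both use the split exact sequence $0\rightarrow M \rightarrow M\oplus M \rightarrow M \rightarrow 0$ with $\textrm{pd}_R(M\oplus M)\leq n$, and both derive the vanishing $\textrm{Tor}_{n+1}^R(I,M)=0$ from the finiteness of the projective (hence flat) dimension of $M$. Your version merely spells out the Tor computation via a length-$n$ projective resolution where the paper cites the inequality $\textrm{fd}_R(M)\leq \textrm{pd}_R(M)\leq n$; the two arguments are the same in substance.
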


\begin{proof}
	Let M be a module with $\textrm{pd}_R(M)\leq n$. Consider the short exact sequence $0 \rightarrow M \rightarrow M\oplus M \rightarrow M \rightarrow 0$, where $\textrm{pd}_R(M\oplus M)\leq n$. Thus, $\textrm{Tor}_{n+1}^R(I,M)=0$ for every injective module $I$ and $M$ is a strongly $n$-PGF module.
\end{proof}

The next result gives a simple characterization of strongly $n$-PGF modules.

\begin{prop}
	\label{n-spgf}
	For every module M, the following are equivalent:
	\begin{enumerate}
		\item M is strongly n-PGF.
		\item There exists a short exact sequence $0\rightarrow M \rightarrow F \rightarrow M \rightarrow 0$ such that $\textrm{pd}_R (F)\leq n$ and $\textrm{Tor}_i^R(I,M)=0$ for every $i>n$ and every injective module I.
		\item There exists a short exact sequence $0\rightarrow M \rightarrow F \rightarrow M \rightarrow 0$ such that $\textrm{pd}_R(F)\leq n$ and $\textrm{Tor}_i^R(I',M)=0$ for every $i>n$ and every module $I'$ with finite injective dimension.
		\item There exists a short exact sequence $0\rightarrow M \rightarrow F \rightarrow M \rightarrow 0$ such that $\textrm{pd}_R(F)\leq n$ and $\textrm{Tor}_{n+1}^R(I',M)=0$ for every module $I'$ with finite injective dimension.
		\end{enumerate}
\end{prop}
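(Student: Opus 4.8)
The plan is to fix a short exact sequence $0\to M \to F\to M \to 0$ with $\textrm{pd}_R(F)\le n$ --- such a sequence is furnished by each of the four conditions --- and to observe that the only difference among (1)--(4) lies in the strength of the accompanying Tor-vanishing hypothesis, \emph{all of which may be read off from this same sequence}. The core of the argument is a dimension-shifting computation: for a fixed injective module $I$, tensoring the sequence with $I$ yields the long exact sequence
$$\cdots \to \textrm{Tor}^R_i(I,F)\to \textrm{Tor}^R_i(I,M)\to \textrm{Tor}^R_{i-1}(I,M)\to \textrm{Tor}^R_{i-1}(I,F)\to\cdots .$$
Since $\textrm{pd}_R(F)\le n$ forces $\textrm{Tor}^R_j(I,F)=0$ for all $j>n$, the connecting homomorphisms become isomorphisms $\textrm{Tor}^R_i(I,M)\cong \textrm{Tor}^R_{i-1}(I,M)$ for every $i\ge n+2$. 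Iterating, $\textrm{Tor}^R_{n+1}(I,M)\cong \textrm{Tor}^R_i(I,M)$ for all $i\ge n+1$, so the single vanishing $\textrm{Tor}^R_{n+1}(I,M)=0$ required by Definition \ref{defns} is equivalent to the vanishing of $\textrm{Tor}^R_i(I,M)$ in all degrees $i>n$. This establishes $(1)\Leftrightarrow(2)$.

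For $(2)\Rightarrow(3)$ I would argue by induction on $\textrm{id}_R(I')$, just as in the proof of Proposition \ref{pgf}. The base case $\textrm{id}_R(I')=0$ is exactly (2). For the inductive step, embed $I'$ in an injective module through a short exact sequence $0\to I'\to E\to L\to 0$ with $E$ injective and $\textrm{id}_R(L)=\textrm{id}_R(I')-1$, and read off from the induced long exact sequence
$$\textrm{Tor}^R_{i+1}(L,M)\to \textrm{Tor}^R_i(I',M)\to \textrm{Tor}^R_i(E,M)$$
that $\textrm{Tor}^R_i(I',M)=0$ for $i>n$, the outer terms vanishing by the induction hypothesis and by (2), respectively. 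The implication $(3)\Rightarrow(4)$ is the trivial specialization to $i=n+1$, and $(4)\Rightarrow(1)$ is the restriction of the hypothesis of (4) to injective modules $I'$ --- these have injective dimension $0$, and for them $\textrm{Tor}^R_{n+1}(I,M)=0$ is precisely the requirement of Definition \ref{defns}. Combined with $(1)\Leftrightarrow(2)$, this closes the chain $(1)\Leftrightarrow(2)\Rightarrow(3)\Rightarrow(4)\Rightarrow(1)$.

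No step presents a genuine obstacle, since the proposition is the $n$-graded analogue of Proposition \ref{spgf} and every argument is routine dimension shifting. The only point requiring care is the bookkeeping in the first step: one must check that the isomorphisms propagate from degree $n+1$ upward, rather than yielding a single isolated isomorphism, as it is precisely this that allows the infinite family of vanishing conditions in (2) to collapse to the single condition in (1). It is also worth emphasizing that one and the same short exact sequence realizes all four statements, so that no new resolution is ever constructed --- only the Tor-vanishing hypotheses are traded back and forth.
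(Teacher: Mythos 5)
Your proof is correct and follows essentially the same route as the paper's: the dimension-shifting argument along the long exact sequence of the fixed sequence $0\rightarrow M\rightarrow F\rightarrow M\rightarrow 0$ (using $\textrm{Tor}_j^R(I,F)=0$ for $j>n$ from $\textrm{fd}_R(F)\leq\textrm{pd}_R(F)\leq n$) to relate (1) and (2), induction on $\textrm{id}_R(I')$ via a short exact sequence $0\rightarrow I'\rightarrow E\rightarrow L\rightarrow 0$ for $(2)\Rightarrow(3)$, and the trivial implications $(3)\Rightarrow(4)\Rightarrow(1)$. The only cosmetic difference is that you establish $(1)\Leftrightarrow(2)$ as a standalone equivalence, whereas the paper closes the single cycle $(1)\Rightarrow(2)\Rightarrow(3)\Rightarrow(4)\Rightarrow(1)$; the underlying computations are identical.
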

	
	\begin{proof} This follows immediately from the Definition of strongly $n$-PGF modules, using standard arguments. \end{proof}


	
 The PGF-dimension of a module $M$, denoted by $\textrm{PGF-dim}_R(M)$, is the minimal $n$ such that there exists an exact sequence of the form $0\rightarrow G_n \rightarrow G_{n-1} \rightarrow \cdots \rightarrow G_0 \rightarrow M \rightarrow 0,$ where $G_0, \dots ,G_{n-1}, G_n$ are PGF modules.
	
	Throughout the rest of this section, we use the following results concerning $\textrm{PGF-dim}_R(M)$.
	
	\begin{prop}\label{oplus}\rm{(\cite[Proposition 2.3]{DE})}
		Let $(M_i)_i$ be a family of modules and $M=\bigoplus_i M_i$ be their direct sum. Then, $\textrm{PGF-dim}_R(M)=\textrm{sup}_{i}\{ \textrm{PGF-dim}_R(M_i)\}$.
	\end{prop}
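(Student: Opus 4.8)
The plan is to prove the two inequalities $\textrm{PGF-dim}_R(M)\leq \sup_i\textrm{PGF-dim}_R(M_i)$ and $\textrm{PGF-dim}_R(M_i)\leq \textrm{PGF-dim}_R(M)$ for each $i$ separately, and then combine them. Write $d=\sup_i\textrm{PGF-dim}_R(M_i)$; if $d=\infty$ the first inequality is vacuous, so I may assume $d<\infty$ for it.

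For $\textrm{PGF-dim}_R(M)\leq d$, I would argue directly. By definition each $M_i$ admits a PGF-resolution of length $d$, say $0\to G_d^{(i)}\to\cdots\to G_0^{(i)}\to M_i\to 0$; a resolution of shorter length $n_i\leq d$ is extended to length $d$ by repeatedly splicing on a syzygy sequence $0\to\Omega G\to P\to G\to 0$ of its leftmost PGF term $G$, using that projective modules are PGF (Corollary \ref{cor}, Theorem \ref{theo0}) and that syzygies of PGF modules are PGF (apply closure of $\textrm{PGF}(R)$ under kernels of epimorphisms to $0\to\Omega G\to P\to G\to 0$). Taking the direct sum over $i$ yields an exact sequence $0\to\bigoplus_i G_d^{(i)}\to\cdots\to\bigoplus_i G_0^{(i)}\to M\to 0$ whose terms are PGF, since $\textrm{PGF}(R)$ is closed under direct sums. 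This is a PGF-resolution of $M$ of length $d$, so $\textrm{PGF-dim}_R(M)\leq d$.

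The reverse inequality is the crux. Since each $M_i$ is a direct summand of $M$, it suffices to show $\textrm{PGF-dim}_R$ does not increase on passing to direct summands. Set $n=\textrm{PGF-dim}_R(M)$, which I may assume finite. The key auxiliary statement I would isolate is the \emph{syzygy characterization}: for any module $N$, one has $\textrm{PGF-dim}_R(N)\leq n$ if and only if the $n$-th syzygy $\Omega^n N$ in some (equivalently any) projective resolution of $N$ is PGF. Granting this, I would choose a projective resolution of $M$ that is the direct sum of projective resolutions of the $M_i$, so that $\Omega^n M\cong\bigoplus_i\Omega^n M_i$. As $\textrm{PGF-dim}_R(M)\leq n$, the module $\Omega^n M$ is PGF; since $\Omega^n M_i$ is a direct summand of $\Omega^n M$ and $\textrm{PGF}(R)$ is closed under direct summands, each $\Omega^n M_i$ is PGF, whence $\textrm{PGF-dim}_R(M_i)\leq n$ by the characterization. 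Together with the first part this gives the desired equality.

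I expect the main obstacle to be establishing the syzygy characterization, specifically its forward implication, which I would prove by induction on $n$ via a generalized Schanuel lemma. Given a PGF-resolution $0\to G_n\to\cdots\to G_0\to N\to 0$, set $K=\textrm{Im}(G_1\to G_0)$, so that $0\to K\to G_0\to N\to 0$ is exact with $\textrm{PGF-dim}_R(K)\leq n-1$; comparing it with a sequence $0\to\Omega^1 N\to P_0\to N\to 0$ coming from a projective resolution yields $\Omega^1 N\oplus G_0\cong K\oplus P_0$, and taking $(n-1)$-st syzygies gives, up to projective summands, $\Omega^n N\oplus\Omega^{n-1}G_0\cong\Omega^{n-1}K$. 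Here $\Omega^{n-1}K$ is PGF by the inductive hypothesis, while $\Omega^{n-1}G_0$ and the projective summands are PGF because syzygies of PGF modules are PGF; closure of $\textrm{PGF}(R)$ under direct summands then forces $\Omega^n N$ to be PGF. As an alternative route, one could instead establish the Ext-characterization $\textrm{PGF-dim}_R(N)=\sup\{k:\textrm{Ext}^k_R(N,L)\neq 0\text{ for some }L\text{ of finite projective dimension}\}$ suggested by Remark \ref{rpd}, after which the isomorphism $\textrm{Ext}^k_R(\bigoplus_i M_i,L)\cong\prod_i\textrm{Ext}^k_R(M_i,L)$ would deliver both inequalities simultaneously.
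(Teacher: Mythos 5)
The paper itself gives no proof of this proposition --- it is quoted verbatim from \cite[Proposition 2.3]{DE} --- so your argument has to stand on its own. Your first inequality (padding each PGF-resolution to length $d$ by splicing in syzygies of the leftmost PGF term, then summing, using closure of $\textrm{PGF}(R)$ under direct sums) is correct, and your plan for the reverse inequality via a syzygy characterization is the natural one. The genuine gap is in your proof of the forward implication of that characterization: you apply Schanuel's lemma to the sequences $0\rightarrow K \rightarrow G_0 \rightarrow N \rightarrow 0$ and $0\rightarrow \Omega^1 N \rightarrow P_0 \rightarrow N \rightarrow 0$ to conclude $\Omega^1 N\oplus G_0\cong K\oplus P_0$. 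Schanuel's lemma requires \emph{both} middle terms to be projective, whereas $G_0$ is merely PGF. What the pullback of the two epimorphisms actually gives is an exact sequence $0\rightarrow \Omega^1 N \rightarrow K\oplus P_0 \rightarrow G_0 \rightarrow 0$ (only the sequence over the projective $P_0$ splits); splitting this sequence would need $\textrm{Ext}^1_R(G_0,\Omega^1 N)=0$, which is not available, since a PGF module has vanishing $\textrm{Ext}^1$ only against modules of \emph{finite projective dimension} (Remark \ref{rpd}), and a syzygy $\Omega^1 N$ need not have finite projective dimension. Indeed your claimed isomorphism is false in general: take $N=G$ a non-projective PGF module with its length-zero PGF-resolution (so $G_0=G$, $K=0$); the claim would read $\Omega^1 G\oplus G\cong P_0$, forcing $G$ to be a direct summand of a projective module, hence projective --- a contradiction.

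The gap is local and repairable, because the pullback sequence suffices. From $0\rightarrow \Omega^1 N \rightarrow K\oplus P_0 \rightarrow G_0 \rightarrow 0$ one shows $\textrm{PGF-dim}_R(\Omega^1 N)\leq n-1$ as follows: let $0\rightarrow G_n\rightarrow\cdots\rightarrow G_1\rightarrow K\rightarrow 0$ be the induced resolution of $K$, and compose the epimorphism $G_1\oplus P_0\rightarrow K\oplus P_0$ with $K\oplus P_0\rightarrow G_0$; its kernel $L$ is PGF because $\textrm{PGF}(R)$ is closed under kernels of epimorphisms, and it sits in an exact sequence $0\rightarrow K_2\rightarrow L\rightarrow \Omega^1 N\rightarrow 0$ with $K_2=\textrm{Im}(G_2\rightarrow G_1\oplus P_0)$; splicing $0\rightarrow G_n\rightarrow\cdots\rightarrow G_2\rightarrow K_2\rightarrow 0$ onto this sequence gives a PGF-resolution of $\Omega^1 N$ of length $n-1$. (Alternatively, one may simply invoke Proposition \ref{three}(2) at this point.) Induction on $n$, applied to $\Omega^1 N$ and the truncated projective resolution of $N$, then shows $\Omega^n N$ is PGF, after which the rest of your argument goes through verbatim. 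Finally, a caution on your proposed alternative route: the Ext-characterization of $\textrm{PGF-dim}_R$ is only valid for modules already known to have \emph{finite} PGF-dimension (exactly as in Holm's analogous result for Gorenstein projective dimension), and finiteness of $\textrm{PGF-dim}_R(M_i)$ for a direct summand of $M$ is part of what is being proved, so that route is circular as stated.
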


\begin{prop}\label{three}\rm{(\cite[Proposition 2.4]{DE})} Let $0\rightarrow M' \rightarrow M \rightarrow M'' \rightarrow 0$ be a short exact sequence of modules. Then,
	\begin{enumerate}
		\item $\textrm{PGF-dim}_R(M)\leq \textrm{max}\{\textrm{PGF-dim}_R(M'),\textrm{PGF-dim}_R(M'')\}$,
		\item $\textrm{PGF-dim}_R(M')\leq \textrm{max}\{\textrm{PGF-dim}_R(M),\textrm{PGF-dim}_R(M'')\}$,
		\item $\textrm{PGF-dim}_R(M'')\leq 1+\textrm{max}\{\textrm{PGF-dim}_R(M),\textrm{PGF-dim}_R(M')\}$.
	\end{enumerate}
\end{prop}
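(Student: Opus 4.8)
The plan is to reduce all three inequalities to a single \emph{syzygy lemma} and then to exploit the closure properties of $\mathrm{PGF}(R)$ recorded after Definition \ref{definpgf} (closure under extensions, direct sums, direct summands and kernels of epimorphisms) together with Proposition \ref{oplus}. For a module $X$, write $\Omega^n X$ for an $n$-th syzygy in a fixed projective resolution of $X$, with $\Omega^0 X = X$. The lemma I would establish first is
\[ \textrm{PGF-dim}_R(X)\leq n \iff \Omega^n X \textrm{ is PGF}. \]
The implication $\Leftarrow$ is immediate, since the truncated resolution $0\rightarrow \Omega^n X \rightarrow P_{n-1}\rightarrow \cdots \rightarrow P_0 \rightarrow X \rightarrow 0$ is then a PGF-resolution of length $n$ (projective modules are PGF). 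The content is the forward implication, call it $S(n)$, which I would prove by induction on $n$; the case $n=0$ is trivial.

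For the inductive step ($n\geq 1$), given $\textrm{PGF-dim}_R(X)\leq n$, choose a PGF-resolution $0\rightarrow G_n \rightarrow \cdots \rightarrow G_0 \rightarrow X \rightarrow 0$ and set $\Omega_G X = \textrm{Ker}(G_0\rightarrow X)$, so that $\textrm{PGF-dim}_R(\Omega_G X)\leq n-1$. Comparing the two presentations $0\rightarrow \Omega X \rightarrow P_0 \rightarrow X \rightarrow 0$ and $0\rightarrow \Omega_G X \rightarrow G_0 \rightarrow X \rightarrow 0$ through the pullback $W=P_0\times_X G_0$, the sequence $0\rightarrow \Omega_G X \rightarrow W \rightarrow P_0 \rightarrow 0$ splits, so $W\cong \Omega_G X \oplus P_0$ has PGF-dimension $\leq n-1$ by Proposition \ref{oplus}, while the other pullback sequence is $0\rightarrow \Omega X \rightarrow W \rightarrow G_0 \rightarrow 0$ with $G_0$ PGF. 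Applying the horseshoe lemma to this last sequence and passing to $(n-1)$-st syzygies gives a short exact sequence $0\rightarrow L \rightarrow \Omega^{n-1}W \rightarrow \Omega^{n-1}G_0 \rightarrow 0$, where $L$ is an $(n-1)$-st syzygy of $\Omega X$. Here $\Omega^{n-1}W$ is PGF by the induction hypothesis and $\Omega^{n-1}G_0$ is PGF since a syzygy of a PGF module is again PGF, so $L$ is PGF by closure under kernels of epimorphisms. Hence $\textrm{PGF-dim}_R(\Omega X)\leq n-1$, and the induction hypothesis applied to $\Omega X$ shows that $\Omega^n X$ is PGF, completing $S(n)$.

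Granting the syzygy lemma, the inequalities follow by diagram chasing on the short exact sequence of syzygies $0\rightarrow \Omega^k M' \rightarrow \Omega^k M \rightarrow \Omega^k M'' \rightarrow 0$ obtained by applying the horseshoe lemma to $0\rightarrow M' \rightarrow M \rightarrow M'' \rightarrow 0$. For (1), take $k=n=\max\{\textrm{PGF-dim}_R(M'),\textrm{PGF-dim}_R(M'')\}$ (assumed finite): the outer terms are PGF by the syzygy lemma, hence $\Omega^n M$ is PGF by closure under extensions, giving $\textrm{PGF-dim}_R(M)\leq n$. For (2), take $k=m=\max\{\textrm{PGF-dim}_R(M),\textrm{PGF-dim}_R(M'')-1\}$: then $\Omega^m M$ is PGF, and $\textrm{PGF-dim}_R(\Omega^m M'')\leq 1$ because its first syzygy $\Omega^{m+1}M''$ is PGF by the syzygy lemma. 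Writing a length-one PGF-resolution $0\rightarrow H_1 \rightarrow H_0 \rightarrow \Omega^m M'' \rightarrow 0$ and forming the pullback $Y$ of $\Omega^m M \rightarrow \Omega^m M'' \leftarrow H_0$, the module $Y$ is PGF (it is an extension of $\Omega^m M$ by $H_1$) and sits in $0\rightarrow \Omega^m M' \rightarrow Y \rightarrow H_0 \rightarrow 0$, so $\Omega^m M'$ is PGF by closure under kernels of epimorphisms and $\textrm{PGF-dim}_R(M')\leq m$.

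Finally, I would reduce (3) to (2). Choosing a projective $P$ with a surjection $P\rightarrow M''$ and kernel $\Omega M''$, the pullback $E$ of $M\rightarrow M''\leftarrow P$ fits into $0\rightarrow M'\rightarrow E \rightarrow P \rightarrow 0$, which splits because $P$ is projective, so $E\cong M'\oplus P$ and $\textrm{PGF-dim}_R(E)=\textrm{PGF-dim}_R(M')$ by Proposition \ref{oplus}; it also fits into $0\rightarrow \Omega M'' \rightarrow E \rightarrow M \rightarrow 0$. Applying (2) to the latter sequence gives $\textrm{PGF-dim}_R(\Omega M'')\leq \max\{\textrm{PGF-dim}_R(M'),\textrm{PGF-dim}_R(M)-1\}$, and splicing a PGF-resolution of $\Omega M''$ with $0\rightarrow \Omega M'' \rightarrow P \rightarrow M'' \rightarrow 0$ yields $\textrm{PGF-dim}_R(M'')\leq \textrm{PGF-dim}_R(\Omega M'')+1$, which is the asserted bound. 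I expect the syzygy lemma to be the main obstacle: since $\mathrm{PGF}(R)$ is closed under kernels of epimorphisms but not under cokernels of monomorphisms, one cannot build PGF-resolutions by a naive horseshoe argument, and the comparison of a PGF-resolution with a projective resolution must instead be carried out through projective syzygies and the induction above. This same asymmetry is what forces the detour through (2) in the proof of (3).
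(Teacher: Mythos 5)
The paper offers no proof of this proposition to compare against: it is quoted verbatim from \cite[Propositions 2.4 and 3.6]{DE}, and the author simply imports it. Judged on its own merits, your argument is correct and self-contained. Your ``syzygy lemma'' ($\textrm{PGF-dim}_R(X)\leq n$ if and only if an $n$-th projective syzygy of $X$ is PGF) is precisely \cite[Proposition 2.2]{DE}, a statement the paper itself invokes elsewhere (for instance in the proof of Theorem \ref{theo1}), so in effect you have reconstructed the underlying machinery of the cited source rather than found a shortcut around it. The individual steps check out: the pullback $W=P_0\times_X G_0$ with its split sequence $0\rightarrow \Omega_G X\rightarrow W\rightarrow P_0\rightarrow 0$ correctly transfers the problem from a PGF-resolution to a projective resolution; the extension argument gives (1); the pullback $Y$ in (2) sits in the two exact sequences you claim, so closure under extensions and under kernels of epimorphisms does yield that $\Omega^m M'$ is PGF; and the reduction of (3) to (2) via $E\cong M'\oplus P$ and the splice $\textrm{PGF-dim}_R(M'')\leq \textrm{PGF-dim}_R(\Omega M'')+1$ is clean. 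Your closing remark about the asymmetry of $\mathrm{PGF}(R)$ (closed under kernels of epimorphisms, not cokernels of monomorphisms) correctly identifies why (3) must be routed through (2).

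One point you must make explicit, since as written several steps are formally incomplete: your syzygy lemma is stated for a fixed projective resolution, but you repeatedly apply it to syzygies arising from horseshoe resolutions --- to $\Omega^{n-1}W$ in the inductive step, and to $\Omega^k M$, $\Omega^k M'$, $\Omega^k M''$ in all three applications at the end. The bridge is the generalized Schanuel lemma: any two $n$-th syzygies of a module become isomorphic after adding projective direct summands, and since $\mathrm{PGF}(R)$ contains the projectives and is closed under direct sums and direct summands, the property ``the $n$-th syzygy is PGF'' is independent of the chosen resolution. This is a one-sentence, entirely routine repair, not a genuine gap, but without it the induction does not literally parse.
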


\begin{prop}\label{precover} \rm{(\cite[Theorem 3.4 (iii)]{DE})}
	Let $M$ be a module and $n$ a nonnegative integer. Then, $\textrm{PGF-dim}_R(M)=n\geq 0$ if and only if there exists a short exact sequence of the form $0 \rightarrow M \rightarrow D \rightarrow G \rightarrow 0$, where G is a PGF-module and $\textrm{pd}_R(D)=n$.
\end{prop}

	\begin{prop}\label{pd}\rm{(\cite[Corollary 3.7]{DE})}
		Let $M$ be a module such that $\textrm{pd}_R(M)<\infty$. Then, $\textrm{PGF-dim}_R(M)=\textrm{pd}_R(M)$.
	\end{prop}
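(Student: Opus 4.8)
The plan is to prove the two inequalities $\textrm{PGF-dim}_R(M)\leq \textrm{pd}_R(M)$ and $\textrm{pd}_R(M)\leq \textrm{PGF-dim}_R(M)$ separately. The first is immediate: since every projective module is PGF (Corollary \ref{cor}, together with the fact that a strongly PGF module is PGF), any projective resolution of $M$ of length $\textrm{pd}_R(M)$ is in particular a PGF-resolution. Hence $\textrm{PGF-dim}_R(M)\leq \textrm{pd}_R(M)<\infty$, which also guarantees that $\textrm{PGF-dim}_R(M)$ is finite.

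The heart of the argument is the following auxiliary fact, which I would isolate first: \emph{a PGF module $G$ with $\textrm{pd}_R(G)<\infty$ is projective}. To prove it, choose an epimorphism $P_0\twoheadrightarrow G$ with $P_0$ projective, and let $\Omega G$ be its kernel, giving a short exact sequence $0\rightarrow \Omega G \rightarrow P_0 \rightarrow G \rightarrow 0$. Since $\textrm{pd}_R(G)<\infty$, the syzygy $\Omega G$ also has finite projective dimension. Because $G$ is PGF, Remark \ref{rpd} yields $\textrm{Ext}^1_R(G,\Omega G)=0$, so this sequence splits. Thus $G$ is a direct summand of the projective module $P_0$, hence projective. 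This is where the PGF hypothesis, and not merely the finiteness of the projective dimension, is genuinely used.

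For the reverse inequality, set $n=\textrm{PGF-dim}_R(M)$, which is finite by the first paragraph. By Theorem \ref{precover} there is a short exact sequence $0\rightarrow M \rightarrow D \rightarrow G \rightarrow 0$ with $G$ a PGF module and $\textrm{pd}_R(D)=n$. From the standard inequality $\textrm{pd}_R(G)\leq \max\{\textrm{pd}_R(D),\textrm{pd}_R(M)+1\}$ and the finiteness of both $\textrm{pd}_R(M)$ and $\textrm{pd}_R(D)=n$, the module $G$ has finite projective dimension; the auxiliary fact then forces $G$ to be projective. Consequently the short exact sequence splits, so $D\cong M\oplus G$ and $M$ is a direct summand of $D$. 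Therefore $\textrm{pd}_R(M)\leq \textrm{pd}_R(D)=n=\textrm{PGF-dim}_R(M)$, and combining with the first inequality gives the stated equality.

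The main obstacle is the auxiliary fact that a PGF module of finite projective dimension is projective; once this is in hand, the two inequalities follow by formal manipulations using Theorem \ref{precover} and the splitting of the syzygy sequence. All the remaining steps are routine applications of the behaviour of projective dimension in short exact sequences.
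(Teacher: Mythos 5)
Your proof is correct, but there is nothing in the paper to compare it against: Proposition \ref{pd} is quoted without proof from \cite[Corollary 3.7]{DE}, so the paper's ``own proof'' is an external citation. What you have produced is a valid, self-contained derivation from the other facts the paper imports or proves. The inequality $\textrm{PGF-dim}_R(M)\leq\textrm{pd}_R(M)$ holds because projective modules are PGF (Corollary \ref{cor} plus the observation, made in the proof of Theorem \ref{theo0}, that strongly PGF modules are PGF), so a finite projective resolution is a PGF-resolution. Your auxiliary fact --- a PGF module $G$ of finite projective dimension is projective --- is exactly the right pivot, and your proof of it is sound: the first syzygy $\Omega G$ in $0\rightarrow \Omega G\rightarrow P_0\rightarrow G\rightarrow 0$ has finite projective dimension, Remark \ref{rpd} gives $\textrm{Ext}^1_R(G,\Omega G)=0$, the sequence splits, and $G$ is a summand of $P_0$. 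Feeding this into the sequence $0\rightarrow M\rightarrow D\rightarrow G\rightarrow 0$ of Theorem \ref{precover}, the standard bound $\textrm{pd}_R(G)\leq\max\{\textrm{pd}_R(D),\textrm{pd}_R(M)+1\}$ shows $G$ has finite projective dimension, hence is projective, hence the sequence splits and $\textrm{pd}_R(M)\leq\textrm{pd}_R(D)=\textrm{PGF-dim}_R(M)$. There is also no circularity in invoking Theorem \ref{precover}: it is \cite[Theorem 3.4]{DE}, which logically precedes the corollary you are proving in that reference. In short, your argument would allow the paper to replace the citation by an internal proof resting only on Theorem \ref{precover}, Remark \ref{rpd}, and Corollary \ref{cor}.
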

	
	We continue with our results of this section.

	\begin{prop}\label{final}
		Let $n$ be a positive integer and $M$ be a strongly n-PGF module. Then, the following hold:
		\begin{enumerate}
			\item If $0\rightarrow N \rightarrow P_{n-1} \rightarrow \cdots \rightarrow P_0 \rightarrow M \rightarrow 0$ is an exact sequence where all $P_i$ are projective, then $N$ is strongly PGF and consequently $\textrm{PGF-dim}_R(M)\leq n$.
			\item Moreover, if $0\rightarrow M \rightarrow F \rightarrow M \rightarrow 0$ is a short exact sequence where $\textrm{pd}_R(F)<\infty$, then $\textrm{PGF-dim}_R(M)=\textrm{pd}(F)$ and consequently $M$ is strongly $k$-PGF with $k:=\textrm{pd}_R(F)$.
		\end{enumerate}
	\end{prop}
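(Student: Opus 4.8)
The plan is to handle the two parts in turn, using the horseshoe lemma for (1) and a homological detection of dimension for (2).

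For part (1), I would begin from the defining short exact sequence $0\to M\to F\to M\to 0$ witnessing that $M$ is strongly $n$-PGF, so that $\textrm{pd}_R(F)\le n$ and, by Proposition \ref{n-spgf}, $\textrm{Tor}_i^R(I,M)=0$ for all $i>n$ and all injective $I$. I would view the given exact sequence $0\to N\to P_{n-1}\to\cdots\to P_0\to M\to 0$ as the truncation of a projective resolution of $M$ whose $n$-th syzygy is exactly $N$, and apply the horseshoe lemma to $0\to M\to F\to M\to 0$ using this same resolution for both outer copies of $M$. This produces a projective resolution of $F$ with terms $P_i\oplus P_i$ whose $n$-th syzygy $L$ fits into a short exact sequence $0\to N\to L\to N\to 0$; since $\textrm{pd}_R(F)\le n$, the $n$-th syzygy in any projective resolution of $F$ is projective, so $L$ is projective. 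Dimension shifting along the given resolution gives $\textrm{Tor}_1^R(I,N)\cong\textrm{Tor}_{n+1}^R(I,M)=0$ for every injective $I$, so by Proposition \ref{spgf} the module $N$ is strongly PGF. As $N$ is then PGF and each $P_i$ is projective (hence PGF), the given sequence is a PGF-resolution of $M$ of length $n$, whence $\textrm{PGF-dim}_R(M)\le n$. Using this same observation (a projective resolution of $M$ always has an $n$-th syzygy), we may regard $\textrm{PGF-dim}_R(M)\le n<\infty$ as established for any strongly $n$-PGF module, which is what makes the estimates in part (2) applicable.

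For part (2), set $k:=\textrm{pd}_R(F)$. For the upper bound, apply Proposition \ref{three}(2) to $0\to M\to F\to M\to 0$ to get $\textrm{PGF-dim}_R(M)\le\max\{\textrm{PGF-dim}_R(F),\textrm{PGF-dim}_R(M)-1\}$; since $\textrm{PGF-dim}_R(M)$ is finite this forces $\textrm{PGF-dim}_R(M)\le\textrm{PGF-dim}_R(F)$, and Proposition \ref{pd} gives $\textrm{PGF-dim}_R(F)=\textrm{pd}_R(F)=k$, so $\textrm{PGF-dim}_R(M)\le k$. For the lower bound, write $d:=\textrm{PGF-dim}_R(M)$; a dimension shift along a PGF-resolution of $M$ of length $d$, combined with Remark \ref{rpd} (giving $\textrm{Ext}^i_R(G,P')=0$ for all $i>0$ whenever $G$ is PGF and $P'$ has finite projective dimension), shows $\textrm{Ext}^j_R(M,P')=0$ for all $j>d$ and all $P'$ of finite projective dimension. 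On the other hand, assuming $k\ge 1$, a length-$k$ projective resolution of $F$ has a top term that does not split off, producing a projective module $P$ with $\textrm{Ext}^k_R(F,P)\ne 0$; applying the long exact sequence of $\textrm{Ext}^*_R(-,P)$ to $0\to M\to F\to M\to 0$, in which $\textrm{Ext}^k_R(F,P)$ is sandwiched between two copies of $\textrm{Ext}^k_R(M,P)$, forces $\textrm{Ext}^k_R(M,P)\ne 0$. Since $P$ has finite projective dimension, the vanishing statement rules out $k>d$, so $k\le d$, and with the upper bound we conclude $\textrm{PGF-dim}_R(M)=k$. (When $k=0$ the module $F$ is projective and dimension shifting gives $\textrm{Tor}_1^R(I,M)\cong\textrm{Tor}_{n+1}^R(I,M)=0$, so $M$ is strongly PGF, i.e. strongly $0$-PGF.)

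Finally, for the assertion that $M$ is strongly $k$-PGF, the equality $\textrm{PGF-dim}_R(M)=k$ together with Proposition \ref{pgf} and a dimension shift along a PGF-resolution of length $k$ yields $\textrm{Tor}_{k+1}^R(I,M)=0$ for every injective $I$; since $\textrm{pd}_R(F)=k$, the sequence $0\to M\to F\to M\to 0$ is exactly a witness that $M$ is strongly $k$-PGF in the sense of Definition \ref{defns}. I expect the main obstacle to be the lower bound in part (2): the PGF-dimension is sensitive only to $\textrm{Ext}$ and $\textrm{Tor}$ against modules of finite projective (respectively injective) dimension, so the crux is to convert the hypothesis $\textrm{pd}_R(F)=k$ into a nonvanishing $\textrm{Ext}^k_R(M,-)$ tested against such a module. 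This is possible precisely because $F$ has finite projective dimension, which permits the test module $P$ to be taken projective.
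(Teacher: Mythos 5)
Your proof is correct. Part (1) is essentially the paper's own argument: the horseshoe lemma applied to $0\rightarrow M\rightarrow F\rightarrow M\rightarrow 0$ with the same truncated resolution on both ends, projectivity of the resulting syzygy of $F$ because $\textrm{pd}_R(F)\leq n$, and the dimension shift $\textrm{Tor}_1^R(I,N)\cong\textrm{Tor}_{n+1}^R(I,M)=0$ feeding into Proposition \ref{spgf}.

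In part (2), however, you take a genuinely different route. The paper proves the strongly $k$-PGF property \emph{first}: from the strongly PGF syzygy $N$ supplied by part (1) it gets $\textrm{Tor}_j^R(I,M)=0$ for $j>n$, then uses the Tor long exact sequence of $0\rightarrow M\rightarrow F\rightarrow M\rightarrow 0$ and $\textrm{fd}_R(F)\leq k$ to see that these groups stabilize, hence vanish, for $j>k$; Proposition \ref{n-spgf}(2) then gives strongly $k$-PGF, part (1) (with $k$ in place of $n$) gives $\textrm{PGF-dim}_R(M)\leq k$, and the reverse inequality follows in one line from Proposition \ref{three}(1) together with Proposition \ref{pd}. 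You reverse the order: you settle $\textrm{PGF-dim}_R(M)=k$ first --- the upper bound via Proposition \ref{three}(2) plus finiteness of the dimension, and the lower bound via an Ext-theoretic detection argument (a projective $P$ with $\textrm{Ext}^k_R(F,P)\neq 0$ coming from the non-split top of a length-$k$ resolution, sandwiched between two copies of $\textrm{Ext}^k_R(M,P)$, played against the vanishing $\textrm{Ext}^j_R(M,P')=0$ for $j>d$ obtained from Remark \ref{rpd} by shifting along a PGF-resolution) --- and only afterwards recover the Tor vanishing needed for strongly $k$-PGF by shifting along a PGF-resolution of length $k$. Your lower bound re-proves by hand exactly what Proposition \ref{three}(1) and Proposition \ref{pd} give immediately, so the paper's route is more economical; what your route buys is a self-contained argument that makes explicit why finiteness of $\textrm{pd}_R(F)$ is the operative hypothesis (the test module can be taken projective), and your final dimension-shifting step is in effect an independent derivation of Proposition \ref{dim}, which is the right move since that proposition only appears after this one in the paper and so cannot be cited. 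Both arguments are complete; all the individual steps you use (the splitting criterion $\textrm{Ext}^k_R(F,P)\neq 0$ for some projective $P$ when $\textrm{pd}_R(F)=k\geq 1$, the sandwich in the contravariant Ext sequence, and the separate treatment of $k=0$) check out.
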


\begin{proof}
	$(1)$ Since $M$ is strongly $n$-PGF, there exists a short exact sequence of the form $0\rightarrow M \rightarrow F \rightarrow M \rightarrow 0$,  such that $\textrm{pd}_R(F)\leq n$ and $\textrm{Tor}_{n+1}^R(I,M)=0$ for every injective module $I$. Since the exact sequence $0\rightarrow N \rightarrow P_{n-1} \rightarrow \cdots \rightarrow P_0 \rightarrow M \rightarrow 0$ is a truncated projective resolution of $M$, there is a module $Q$ such that the following diagram is commutative:
\[
\begin{array}{ccccccccccccc}
	& & 0 & & 0 & & & & 0 & & 0 & &\\
    & &\downarrow & &\downarrow & & & &\downarrow & & \downarrow & & \\
    0 &\rightarrow & N & \rightarrow & P_{n-1} & \rightarrow & \cdots & \rightarrow & P_0 & \rightarrow & M & \rightarrow & 0 \\
    & &\downarrow & &\downarrow & & & &\downarrow & & \downarrow & & \\
    0 &\rightarrow & Q & \rightarrow & P_{n-1}\oplus P_{n-1} & \rightarrow & \cdots & \rightarrow & P_0 \oplus P_0 & \rightarrow & F & \rightarrow & 0 \\
    & &\downarrow & &\downarrow & & & &\downarrow & & \downarrow & & \\
    0 &\rightarrow & N & \rightarrow & P_{n-1} & \rightarrow & \cdots & \rightarrow & P_0 & \rightarrow & M & \rightarrow & 0 \\
    & &\downarrow & &\downarrow & & & &\downarrow & & \downarrow & & \\
	& & 0 & & 0 & & & & 0 & & 0 & &
\end{array}
\]
Since $\textrm{pd}_R(F)\leq n$, it follows that $Q$ is projective module. We observe that $\textrm{Tor}_1^R(I,N)=\textrm{Tor}_{n+1}^R(I,M)=0$ for every injective module $I$. Thus, Proposition \ref{n-spgf} (2) implies that the module $N$ is strongly PGF.

$(2)$ We consider a short exact sequence of the form $0\rightarrow M \rightarrow F \rightarrow M \rightarrow 0 $, such that $\textrm{pd}_R(F)=k<\infty$. Consider a truncated projective resolution of the module $M$ of length $n$,
$0\rightarrow N \rightarrow P_{n-1} \rightarrow \cdots \rightarrow P_0 \rightarrow M \rightarrow 0.$
Using $(1)$ which we have already proved, we obtain that $N$ is strongly PGF. By Proposition \ref{n-spgf} (2), there exists a short exact sequence $0\rightarrow N \rightarrow P \rightarrow N \rightarrow 0$ such that $P$ is projective and $\textrm{Tor}_i^R(I,N)=0$ for every $i>0$ and every injective module $I$. Let $I$ be an injective module. The short exact sequence $0\rightarrow M \rightarrow F \rightarrow M \rightarrow 0$ induces a long exact sequence of the form $$\cdots \rightarrow \textrm{Tor}_{i+1}^R(I,F) \rightarrow \textrm{Tor}_{i+1}^R(I,M)\rightarrow \textrm{Tor}_i^R(I,M)\rightarrow \textrm{Tor}_i^R(I,F) \rightarrow \cdots,$$ implying that  $\textrm{Tor}_{i+1}^R(I,M)=\textrm{Tor}_i^R(I,M)$ for every $i>k$. Thus, $\textrm{Tor}_{i}^R(I,M)=\textrm{Tor}_{n+i}^R(I,M)=\textrm{Tor}_{i}^R(I,N)=0$ for every $i>k$. By Proposition \ref{n-spgf} (2), we conclude that $M$ is strongly $k$-PGF. It remains to prove that $\textrm{PGF-dim}_R (M)=k$. By (1) we have $\textrm{PGF-dim}_R (M)\leq k$. Since $\textrm{pd}_R(F)=k<\infty$, Proposition \ref{pd} implies that $\textrm{PGF-dim}_R(F)=\textrm{pd}_R(F)=k$. Using Proposition \ref{three} (1) and the short exact sequence $0\rightarrow M \rightarrow F \rightarrow M \rightarrow 0 $, we get the inequality $k=\textrm{PGF-dim}_R(F)\leq \textrm{PGF-dim}_R(M)$. We conclude that $\textrm{PGF-dim}_R(M)=k$.
\end{proof} 

\begin{prop}\label{dim}
	Let M be an R-module with finite $\textrm{PGF}$-dimension and let $n$ be a nonnegative integer such that $\textrm{PGF-dim}_R(M)\leq n$. Then, $\textrm{Tor}_i^R(I',M)=0$ for every $i>n$ and every module $I'$ with finite injective dimension.
	
\end{prop}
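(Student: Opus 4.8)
The plan is to argue by induction on $n$, reducing everything to the defining property of PGF modules recorded in Proposition \ref{pgf}. The essential input is the equivalence $(1)\Leftrightarrow(3)$ there: a PGF module $G$ satisfies $\textrm{Tor}_i^R(I',G)=0$ for every $i>0$ and every module $I'$ of finite injective dimension. This settles the base case $n=0$ immediately, since $\textrm{PGF-dim}_R(M)\leq 0$ forces $M$ to be PGF, whence $\textrm{Tor}_i^R(I',M)=0$ for all $i>0=n$.

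For the inductive step, suppose the statement holds for $n-1$ and let $\textrm{PGF-dim}_R(M)\leq n$ with $n\geq 1$. I would fix a PGF-resolution $0\rightarrow G_n\rightarrow\cdots\rightarrow G_0\rightarrow M\rightarrow 0$ and set $C=\ker(G_0\rightarrow M)$, producing a short exact sequence $0\rightarrow C\rightarrow G_0\rightarrow M\rightarrow 0$ with $G_0$ a PGF module. The truncated complex $0\rightarrow G_n\rightarrow\cdots\rightarrow G_1\rightarrow C\rightarrow 0$ is a PGF-resolution of $C$ of length $n-1$, so $\textrm{PGF-dim}_R(C)\leq n-1$; alternatively this follows from Proposition \ref{three}(2), which gives $\textrm{PGF-dim}_R(C)\leq\max\{\textrm{PGF-dim}_R(G_0),\textrm{PGF-dim}_R(M)-1\}=n-1$. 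By the induction hypothesis, $\textrm{Tor}_i^R(I',C)=0$ for every $i>n-1$ and every module $I'$ of finite injective dimension.

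Now fix such an $I'$ and apply $I'\otimes-$ to the short exact sequence to obtain the long exact sequence in $\textrm{Tor}$. For $i>n$ the term $\textrm{Tor}_i^R(I',G_0)$ vanishes because $G_0$ is PGF and $i>0$, while $\textrm{Tor}_{i-1}^R(I',C)$ vanishes because $i-1\geq n>n-1$. The exact segment $\textrm{Tor}_i^R(I',G_0)\rightarrow\textrm{Tor}_i^R(I',M)\rightarrow\textrm{Tor}_{i-1}^R(I',C)$ then forces $\textrm{Tor}_i^R(I',M)=0$, which closes the induction.

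The argument is essentially a dimension shift, so there is no genuine obstacle beyond the bookkeeping of indices; one could equally splice the resolution into its short exact sequences $0\rightarrow C_j\rightarrow G_j\rightarrow C_{j-1}\rightarrow 0$ and shift down $n$ times to reach $\textrm{Tor}_i^R(I',M)\cong\textrm{Tor}_{i-n}^R(I',G_n)$, which is zero for $i>n$ since then $i-n>0$. The only point requiring real care is that the vanishing of $\textrm{Tor}$ in positive degrees for PGF modules must hold against \emph{all} modules of finite injective dimension, not merely injective ones; this is exactly what Proposition \ref{pgf}(3) supplies, and without it the induction would fail to propagate through the syzygies.
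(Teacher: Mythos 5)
Your proof is correct, and it takes a genuinely different route from the paper's. The paper disposes of this proposition in two sentences: since projective modules are flat, every PGF module is Gorenstein flat, so $\textrm{PGF-dim}_R(M)\leq n$ forces $\textrm{Gfd}_R(M)\leq n$, and the Tor-vanishing is then quoted from Holm's Theorem 3.14 in \cite{Ho} for modules of finite Gorenstein flat dimension. You instead stay inside the paper's own toolkit and run a dimension-shifting induction on $n$ whose only input is the equivalence $(1)\Leftrightarrow(3)$ of Proposition \ref{pgf}; both your inductive argument and the alternative splicing $\textrm{Tor}_i^R(I',M)\cong \textrm{Tor}_{i-n}^R(I',G_n)$ are sound. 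The trade-off is this: the paper's proof is shorter, but it outsources the content to \cite{Ho}, where Theorem 3.14 is stated under a coherence hypothesis on the ring (the setting in which Holm establishes the good behaviour of the Gorenstein flat class), whereas the present paper works over an arbitrary unital ring; the citation is harmless only because the single direction actually used --- finite Gorenstein flat dimension implies the Tor-vanishing --- is the coherence-free dimension shift, i.e.\ essentially the argument you wrote out. So your version is self-contained and valid verbatim in the paper's full generality, at the cost of some bookkeeping. One small overstatement at the end of your write-up: the induction would not ``fail'' without Proposition \ref{pgf}(3); you could run it against injective modules only, using Proposition \ref{pgf}(2), and afterwards extend to all $I'$ of finite injective dimension by a second induction on $\textrm{id}_R(I')$, exactly as the paper does in the proof of Proposition \ref{n-spgf}. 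Part (3) merely lets you collapse the two inductions into one.
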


\begin{proof}Since PGF modules are Gorenstein flat, the PGF dimension bounds the Gorenstein flat dimension. The result is known for modules of finite Gorenstein flat dimension (see \cite[Theorem 3.14]{Ho}).\end{proof}

	\begin{theo}\label{theo1}
		Let $M$ be an R-module and $n$ a nonnegative integer. Then, $\textrm{PGF-dim}_R(M)\leq n$ if and only if M is a direct summand of a strongly $n$-PGF module.
	\end{theo}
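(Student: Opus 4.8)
The plan is to prove both implications, mirroring the structure of the analogous result in \cite{MT} (Theorem 1.6 of this excerpt) but adapted to the PGF setting using the tools already developed in Section 3.

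For the easy direction, suppose $M$ is a direct summand of a strongly $n$-PGF module $N$. By Proposition \ref{final}(1), a strongly $n$-PGF module has $\textrm{PGF-dim}_R(N)\leq n$. Since $M$ is a direct summand of $N$, we have $N\cong M\oplus M'$ for some module $M'$, and Proposition \ref{oplus} gives $\textrm{PGF-dim}_R(M)\leq \textrm{PGF-dim}_R(N)\leq n$. This settles the ``if'' part cleanly.

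For the harder direction, assume $\textrm{PGF-dim}_R(M)\leq n$. The strategy is to produce a strongly $n$-PGF module having $M$ as a direct summand, by imitating the construction in Theorem \ref{theo0}. First I would invoke Theorem \ref{precover}: there is a short exact sequence $0\rightarrow M\rightarrow D\rightarrow G\rightarrow 0$ with $G$ a PGF module and $\textrm{pd}_R(D)=\textrm{PGF-dim}_R(M)\leq n$. Since $G$ is PGF, Theorem \ref{theo0} exhibits $G$ as a direct summand of a strongly PGF module $H$, say $H\cong G\oplus G'$, sitting in a short exact sequence $0\rightarrow H\rightarrow P\rightarrow H\rightarrow 0$ with $P$ projective on which $I\otimes-$ is exact for every injective $I$. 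I would then assemble, using the horseshoe-type argument, a short exact sequence $0\rightarrow M\oplus G'\rightarrow D\oplus P\rightarrow M\oplus G'\oplus(\text{something})\rightarrow 0$; more cleanly, the aim is to build a short exact sequence $0\rightarrow \widetilde M\rightarrow F\rightarrow \widetilde M\rightarrow 0$ with $\textrm{pd}_R(F)\leq n$ and $\textrm{Tor}_{n+1}^R(I,\widetilde M)=0$ for every injective $I$, where $\widetilde M$ has $M$ as a direct summand. The natural candidate for $\widetilde M$ is $M\oplus H$ (or a suitable shift), exploiting that $H$ is already strongly PGF and that Proposition \ref{sum} shows direct sums of strongly $n$-PGF modules are strongly $n$-PGF; combining the self-extension $0\rightarrow H\rightarrow P\rightarrow H\rightarrow 0$ of $H$ with a self-extension of $M$ coming from Theorem \ref{precover} should yield the desired self-extension of $M\oplus H$.

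The main obstacle is manufacturing an actual \emph{self-extension} $0\rightarrow M'\rightarrow F\rightarrow M'\rightarrow 0$ for a module $M'$ containing $M$ as a summand, since Theorem \ref{precover} only gives a one-sided sequence $0\rightarrow M\rightarrow D\rightarrow G\rightarrow 0$ whose cokernel $G$ differs from $M$. The trick is to check, via Proposition \ref{dim}, that $\textrm{Tor}_i^R(I',M)=0$ for all $i>n$ and all $I'$ of finite injective dimension, so that the Tor-vanishing condition $\textrm{Tor}_{n+1}^R(I,\widetilde M)=0$ is automatic once $\widetilde M$ is built from $M$ and PGF modules. Thus the real content is the module-theoretic construction of the self-map $F\twoheadrightarrow\widetilde M$ with kernel $\widetilde M$ and $\textrm{pd}_R(F)\leq n$; I would obtain it by splicing the projective-dimension-$\leq n$ data from $D$ with the strongly PGF self-extension of $H$, then verify $\textrm{pd}_R(F)\leq n$ from $\textrm{pd}_R(D)\leq n$ and $\textrm{pd}_R(P)=0$, and finally appeal to Proposition \ref{n-spgf} to conclude $\widetilde M$ is strongly $n$-PGF. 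Since $M$ is a direct summand of $\widetilde M$, this completes the proof.
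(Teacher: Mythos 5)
Your easy direction is correct and matches the paper: a direct summand of a strongly $n$-PGF module $N$ has $\textrm{PGF-dim}_R(M)\leq \textrm{PGF-dim}_R(N)\leq n$ by Proposition \ref{oplus} and Proposition \ref{final}. Your reductions in the hard direction are also sound as far as they go: Theorem \ref{precover} gives $0\rightarrow M\rightarrow D\rightarrow G\rightarrow 0$ with $G$ PGF and $\textrm{pd}_R(D)\leq n$, and you correctly observe that once a self-extension $0\rightarrow \widetilde{M}\rightarrow F\rightarrow \widetilde{M}\rightarrow 0$ with $\textrm{pd}_R(F)\leq n$ is built, the condition $\textrm{Tor}_{n+1}^R(I,\widetilde{M})=0$ comes for free from Proposition \ref{dim} (provided you also check $\textrm{PGF-dim}_R(\widetilde{M})\leq n$, e.g.\ via Proposition \ref{oplus}).

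However, the central construction is missing, and the finite splicing you sketch cannot work. Take your candidate $\widetilde{M}=M\oplus H$ with $H=G\oplus G'$ strongly PGF and $0\rightarrow H\rightarrow P\rightarrow H\rightarrow 0$: the direct sum of this sequence with $0\rightarrow M\rightarrow D\rightarrow G\rightarrow 0$ is $0\rightarrow M\oplus H\rightarrow D\oplus P\rightarrow G\oplus H\rightarrow 0$, whose kernel $M\oplus H$ and cokernel $G\oplus H$ do \emph{not} agree: the discrepancy ``$M$ versus $G$'' persists. Any attempt to repair it with one more sequence (say $0\rightarrow G_0\rightarrow P_0\rightarrow M\rightarrow 0$, a syzygy sequence for $M$) re-creates the cokernel $M$ but introduces a new kernel term $G_0$, which again fails to match; no finite assembly terminates. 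This is precisely why the paper uses an infinite telescope: it takes \emph{all} the syzygy sequences $0\rightarrow G_j\rightarrow P_j\rightarrow G_{j-1}\rightarrow 0$ of $M$ (the $G_j$ become PGF for $j\geq n-1$, and all have $\textrm{PGF-dim}_R\leq n$), the sequence $0\rightarrow M\rightarrow D\rightarrow G^0\rightarrow 0$, and \emph{all} the cosyzygy sequences $0\rightarrow G^i\rightarrow P^i\rightarrow G^{i+1}\rightarrow 0$ coming from a projective coresolution of the PGF module $G^0$, and sums this infinite family. The summed sequence $0\rightarrow N\rightarrow Q\rightarrow N\rightarrow 0$ then has identical kernel and cokernel $N=\bigoplus_{i\geq 0}G^i\oplus M\oplus\bigoplus_{j\geq 0}G_j$, and $Q=\bigoplus_{i\geq 0}P^i\oplus D\oplus\bigoplus_{j\geq 0}P_j$ has $\textrm{pd}_R(Q)=\textrm{pd}_R(D)\leq n$; Propositions \ref{oplus} and \ref{dim} then finish exactly as you anticipated. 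So your proposal identifies the right auxiliary results but lacks the one idea (the shift-absorbing infinite direct sum, in the spirit of Theorem \ref{theo0} but mixing resolutions and coresolutions) that makes the self-extension exist at all.
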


\begin{proof}
	The case $n=0$ follows from Lemma 4.1 of \cite{SS}. We assume now that $0<\textrm{PGF-dim}_R(M)\leq n$. By Proposition \ref{precover}, there exists a short exact sequence of the form $0\rightarrow M \rightarrow D \rightarrow G^0 \rightarrow 0,$ where $G^0$ is PGF and $\textrm{pd}_R(D)=\textrm{PGF-dim}_R(M)\leq n$. We consider now a truncated projective resolution of $M$ of length $n$, 
$0\rightarrow G_{n-1}\rightarrow P_{n-1}\rightarrow \cdots \rightarrow P_0 \rightarrow M \rightarrow 0,$ where $P_i$ is projective for every $i$ such that $0\leq i \leq n-1$ and $G_{n-1}$ is a PGF module (see \cite[Proposition 2.2]{DE}). Let $G_0=\textrm{Ker}(P_0\rightarrow M)$ and $G_i=\textrm{Ker}(P_{i}\rightarrow P_{i-1})$ for every $i\geq 1$. Then, by Proposition \ref{three} (2) and the short exact sequence $0\rightarrow G_0 \rightarrow P_0 \rightarrow M \rightarrow 0$, we have $\textrm{PGF-dim}_R(G_0)\leq n$. Using again Proposition \ref{three} (2) and the short exact sequences $0\rightarrow G_i \rightarrow P_i \rightarrow G_{i-1}\rightarrow 0$ where $1\leq i \leq n-1$, an inductive argument on $i$ shows that $\textrm{PGF-dim}_R(G_i)\leq n$ for every $i$ such that $0\leq i \leq n-1$. We consider now a projective resolution of $G_{n-1}$, $\cdots \rightarrow P_{n+2} \rightarrow P_{n+1} \rightarrow P_n \rightarrow G_{n-1}\rightarrow 0$, and let $G_i=\textrm{Im}(P_{i+1}\rightarrow P_i)$ for every $i\geq n$. Since $G_{n-1}$ is a PGF module and the class of PGF modules is closed under kernels of epimorphisms, the short exact sequence $0\rightarrow G_n \rightarrow P_n \rightarrow G_{n-1} \rightarrow 0$ implies that $G_n$ is also PGF. Using induction on $i$ and the short exact sequences $0\rightarrow G_i \rightarrow P_i \rightarrow G_{i-1}\rightarrow 0$ for $i\geq n$, the same argument implies that $G_i$ is a PGF-module for every $i\geq n-1$. We conclude that $\textrm{PGF-dim}_R(G_i)\leq n$ for every $i\geq 0$. Since $G^0$ is a PGF-module, by definition it admits a right projective resolution $0\rightarrow G^0\rightarrow P^0 \rightarrow P^1 \rightarrow P^2 \rightarrow \cdots .$ Let $G^i=\textrm{Im}(P^{i-1}\rightarrow P^i)$ for every $i\geq 1$. Then, $\textrm{PGF-dim}_R(G^i)=0$ for every $i\geq 0$. To summarize, we have the following short exact sequences 
\[
\begin{array}{ccccccccc}
	& & \vdots & &\vdots & &\vdots & & \\
	0 & \rightarrow & G^1 & \rightarrow & P^1 & \rightarrow & G^2 &
	\rightarrow & 0 \\
	0 & \rightarrow & G^0 & \rightarrow & P^0 & \rightarrow & G^1 &
	\rightarrow & 0 \\
	0 & \rightarrow & M & \rightarrow & D & \rightarrow & G^0 &
	\rightarrow & 0 \\
	0 & \rightarrow & G_0 & \rightarrow & P_0 & \rightarrow & M &
	\rightarrow & 0 \\
	0 & \rightarrow & G_1 & \rightarrow & P_1 & \rightarrow & G_0 &
	\rightarrow & 0 \\
	& & \vdots & &\vdots & &\vdots & &
\end{array}
\]
\noindent and the direct sum of them yields the short exact sequence $0\rightarrow N \rightarrow Q \rightarrow N \rightarrow 0 $, where $N=\bigoplus_{i\geq 0}G^i \bigoplus M \bigoplus_{j\geq 0}G_j$ and $Q=\bigoplus_{i\geq 0}P^i \bigoplus D \bigoplus_{j\geq 0}P_j$. Then, we obviously have $\textrm{pd}_R(Q)=\textrm{pd}_R(D)\leq n$. Using Proposition \ref{oplus} we get $\textrm{PGF-dim}_R(N)\leq n$. Thus, Proposition \ref{dim} yields $\textrm{Tor}_{n+1}^R(I,N)=0$ for every injective module $I$. We conclude that $N$ is strongly $n$-PGF and $M$ is a direct summand of $N$. 

Conversely, let $M$ be a direct summand of a strongly $n$-PGF module $N$. Then, Proposition \ref{oplus} yields $\textrm{PGF-dim}_R(M)\leq \textrm{PGF-dim}_R(N)$. Since $N$ is strongly $n$-PGF, Proposition \ref{final} (2) implies that $\textrm{PGF-dim}_R(N)\leq n$. We conclude that $\textrm{PGF-dim}_R(M)\leq n$.\end{proof}

\noindent\textbf{Acknowledgments.} Research supported by the Hellenic Foundation for Research and Innovation (H.F.R.I.) under the "1st Call for H.F.R.I. Research Projects to support Faculty members and Researchers and the procurement of high-cost research equipment grant", project number 4226.

\section*{Declarations}
\noindent\textbf{Conflict of interest.} The authors declare that they have no conflict of interest.

\end{document}